\newtheorem{Theorem}{Theorem}[section]
\def\beq#1#2\eeq{%
        \begin{equation}%
        \label{#1}%
            #2%
        \end{equation}%
    }
\title[Bernoulli-Hirzebruch]{Todd polynomials and Hirzebruch numbers}
\author{V.M. Buchstaber}\address{Steklov Mathematical Institute and Moscow State University, Russia}
\email{buchstab@mi-ras.ru}
\author{A.P. Veselov}
\address{Department of Mathematical Sciences,
Loughborough University, Loughborough LE11 3TU, UK}
\email{A.P.Veselov@lboro.ac.uk}
\begin{document}

\maketitle

{\begin{center}\normalsize\it
     To our teacher Sergei Petrovich Novikov on his 85-th birthday
   \end{center}}

\begin{abstract}
In 1956 Hirzebruch found an explicit formula for the denominators of the Todd polynomials, which was proved later in his joint work with Atiyah.
We present a new formula for the Todd polynomials in terms of the ``forgotten symmetric functions", which follows from our previous work on complex cobordisms. In particular, this leads to a simpler proof of the Hirzebruch formula and provides new interpretations for the Hirzebruch numbers.
\end{abstract}


\section{Introduction}

Todd polynomials play a seminal role in algebraic geometry and topology (see \cite{Hirz-66, B-2012}). In terms of the Chern classes they were introduced by Hirzebruch \cite{Hirz-56} via the generating function
\beq{bern}
\prod_{i=1}^n\frac{tx_i}{1-e^{-tx_i}}=\sum_{k=0}^\infty T_k(c_1,\dots, c_n) t^k,
\eeq
where the Chern classes $c_k=e_k$ are the elementary symmetric functions of $x_1,\dots, x_n$ known as Chern roots (see \cite{Hirz-66}):
$$
T_0=1, T_1=\frac{1}{2} c_1, \quad
T_2=\frac{1}{12} (c_1^2+c_2), \quad
T_3=\frac{1}{24} c_1 c_2,
$$
$$
T_4=\frac{1}{720} (-c_1^4+4c_1^2c_2+3c_2^2+c_1c_3-c_4), \,\,
T_5=\frac{1}{1440} (-c_1^3c_2+3c_1c_2^2+c_1^2c_3-c_1c_4),
$$
$$
T_6=\frac{1}{60480} (2c_1^6-12c_1^4c_2 +11c_1^2c_2^2+10c_2^3+5c_1^3c_3+11c_1c_2c_3-c_3^2
$$
$$-5c_1^2c_5-9c_2c_4-2c_1c_5+2c_6).
$$
They appear both in the celebrated Hirzebruch-Riemann-Roch theorem and Atiyah-Singer Index formula \cite{Hirz-66}.
Since the corresponding Todd number $T_n(M^{2n})$ is integer for any stably complex manifold $M^{2n}$ (see \cite{Mil-60, Nov-60}), this leads to the important divisibility conditions \cite{Hirz-58, Hirz-66} for the characteristic Chern numbers of such manifolds by the denominator of $T_n$. Since the Todd number of any complex projective space $\mathbb CP^n$ equals 1, these conditions cannot be improved.

Hirzebruch \cite{Hirz-56} discovered the following formula for the denominators $\mu(T_k)$ of the Todd polynomials $T_k$:
\beq{Hirz1}
\mu(T_k)=\prod_{p \, prime}p^{\lfloor \frac{k}{p-1}\rfloor},
\eeq
where $\lfloor x \rfloor$ means the integer part of $x$ (largest integer not exceeding $x$). The proof of this formula was given later in his joint work with Atiyah \cite{AH}.

We will denote the numbers given by the right hand side of (\ref{Hirz1}) as $\mathfrak h_k$ and call them {\it Hirzebruch numbers}:
$$
\mathfrak h_0=1, \,\, \mathfrak h_1=2,\,\, \mathfrak h_2=12,\,\, \mathfrak h_3=24,\,\, \mathfrak h_4=720,\,\, \mathfrak h_5=1440,\,\, \mathfrak h_6=60480.
$$
They satisfy the property
$
\mathfrak h_{2k+1}=2\mathfrak h_{2k}
$
and appear in the On-line Encyclopedia of Integer Sequences (in a different interpretation) as sequence A091137. 
 
In this paper we present new interpretations of these numbers.

\begin{Theorem}
The Hirzebruch number $\mathfrak h_k$ is the least common multiple of the numbers
\beq{LCM1}
\mathfrak h_k=lcm((\lambda_1+1)!\dots (\lambda_l+1)!, \quad \lambda=(\lambda_1,\dots,\lambda_l) \in \mathcal P_k)
\eeq
or, equivalently,
\beq{LCM2}
\mathfrak h_k=lcm((\lambda_1+1)\dots (\lambda_l+1), \quad \lambda=(\lambda_1,\dots,\lambda_l) \in \mathcal P_k),
\eeq
where $\mathcal P_k$ is the set of all partitions $\lambda=(\lambda_1,\dots, \lambda_l), |\lambda|:=\lambda_1+\dots +\lambda_l=k.$
\end{Theorem}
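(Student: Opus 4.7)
The plan is to verify the identity one prime at a time. Since the $p$-adic valuation $v_p$ of a least common multiple is the maximum of the $v_p$'s of its entries, and since by definition $v_p(\mathfrak h_k)=\lfloor k/(p-1)\rfloor$, both assertions reduce to proving, for every prime $p$,
\[
\lfloor k/(p-1)\rfloor \;=\; \max_{\lambda\in\mathcal P_k}\sum_{i=1}^{l}v_p(\lambda_i+1) \;=\; \max_{\lambda\in\mathcal P_k}\sum_{i=1}^{l}v_p\!\bigl((\lambda_i+1)!\bigr).
\]
In particular, establishing these two equalities simultaneously will also give the equivalence of (\ref{LCM1}) and (\ref{LCM2}) as a by-product.

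For the upper bounds I would invoke two classical inputs. The inequality $p^{a}\geq 1+a(p-1)$, valid for $a\geq 0$ by Bernoulli or a one-line induction, gives $v_p(n)\leq (n-1)/(p-1)$ for every integer $n\geq 1$, whence
\[
\sum_i v_p(\lambda_i+1)\;\leq\;\frac{1}{p-1}\sum_i \lambda_i\;=\;\frac{k}{p-1}.
\]
Legendre's formula $v_p(m!)=(m-s_p(m))/(p-1)$, together with $s_p(m)\geq 1$ for $m\geq 1$, yields the analogous estimate $\sum_i v_p\bigl((\lambda_i+1)!\bigr)\leq k/(p-1)$. Since the left-hand sides are integers, each maximum is at most $\lfloor k/(p-1)\rfloor$.

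To saturate both bounds I would exhibit a single extremal partition: writing $k=q(p-1)+r$ with $0\leq r<p-1$, take $\lambda=\bigl((p-1)^q,r\bigr)$ (dropping the final entry when $r=0$). Since $v_p(p)=v_p(p!)=1$ and $v_p(r+1)=v_p((r+1)!)=0$ for $0\leq r\leq p-2$, this partition realises the value $q=\lfloor k/(p-1)\rfloor$ in both sums, completing the proof. I do not foresee a serious obstacle here; the only judgement involved is the identification of $p-1$ as the ``most $p$-efficient'' part size, which is transparent from the fact that the ratio $v_p(n)/(n-1)$ is maximised at $n=p$ among integers $n\geq 2$. The content of the theorem is then that this purely combinatorial optimisation reproduces Hirzebruch's prime factorisation (\ref{Hirz1}) exactly, independently of the Todd polynomial formalism.
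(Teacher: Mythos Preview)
Your argument is correct and complete: the Bernoulli/Legendre upper bounds and the extremal partition $\lambda=((p-1)^q,r)$ pin down $v_p$ of both least common multiples as $\lfloor k/(p-1)\rfloor$, and the equivalence of (\ref{LCM1}) and (\ref{LCM2}) falls out automatically.

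This is, however, a genuinely different route from the paper's. The authors do not prove Theorem~1.1 by a direct $p$-adic computation; instead they deduce it from their Todd polynomial formulas. Using the expansion $T_k=\sum_\lambda f_\lambda/(\lambda+1)!$ in the $\mathbb Z$-basis of forgotten symmetric functions (Theorem~1.2, proved via the cobordism formula from \cite{BV-2020}) they read off $\mu(T_k)=\operatorname{lcm}((\lambda+1)!)$; using the second expansion $T_k=(-1)^k\sum_\lambda g_\lambda/\prod(\lambda_i+1)$ in their new $\mathbb Z$-basis $g_\lambda$ (Theorem~3.2) they get $\mu(T_k)=\operatorname{lcm}(\prod(\lambda_i+1))$. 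Only at the very end do they invoke the same extremal partition $((p-1)^m,\dots)$ you found, to identify this common value with the product $\prod_p p^{\lfloor k/(p-1)\rfloor}$. So the paper proves \emph{more}---it simultaneously recovers Hirzebruch's theorem $\mu(T_k)=\mathfrak h_k$---at the cost of heavy machinery, while your proof is elementary and self-contained but says nothing about Todd polynomials. The paper in fact acknowledges (in a footnote to Theorem~3.3) that ``Alexey Ustinov provided us with a direct number-theoretic proof of this result''; your argument is presumably along those lines.
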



The central result of our paper is the following new formulas for the Todd polynomials (or, more precisely, for the corresponding Todd symmetric functions) in terms of the ``forgotten symmetric functions" $f_\lambda$ \cite{Mac, Stanley}.

\begin{Theorem}
The Todd symmetric function $T_k$ can be expressed in terms of the forgotten symmetric functions as follows 
\beq{Toddforint}
T_k=\sum_{\lambda: |\lambda|=k}\frac{f_\lambda}{(\lambda_1+1)!\dots (\lambda_l+1)!}.
\eeq
\end{Theorem}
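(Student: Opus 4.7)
The plan is to prove the equivalent \emph{ungraded} generating identity
\[
\prod_{i=1}^n \frac{x_i}{1-e^{-x_i}}=\sum_{\lambda}\frac{f_\lambda(x)}{(\lambda_1+1)!\cdots(\lambda_l+1)!}
\]
in the ring $\Lambda$ of symmetric functions. Since each $f_\lambda$ is homogeneous of degree $|\lambda|$, extracting the component of degree $k$ from both sides recovers \eqref{Toddforint}; on the left that component equals $T_k$ because $\prod_i tx_i/(1-e^{-tx_i})$ is obtained from $\prod_i x_i/(1-e^{-x_i})$ by the scaling $x_i\mapsto tx_i$ and $f_\lambda(tx)=t^{|\lambda|}f_\lambda(x)$.

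The main tool is the Cauchy-type reproducing kernel for the pair $(f_\lambda,e_\lambda)$. Because the Hall involution $\omega$ is an isometry sending $m_\lambda\mapsto f_\lambda$ and $h_\mu\mapsto e_\mu$, the standard duality $\langle m_\lambda,h_\mu\rangle=\delta_{\lambda\mu}$ gives $\langle f_\lambda,e_\mu\rangle=\delta_{\lambda\mu}$, and consequently
\[
\sum_\lambda f_\lambda(x)\,e_\lambda(y)=\prod_{i,j}\frac{1}{1-x_iy_j}.
\]

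The key trick I would use is to introduce a formal specialization $y=y^{*}$ of the secondary alphabet determined by
\[
\prod_j(1+y_j^{*}z)=\sum_{k\ge 0}\frac{z^k}{(k+1)!}=\frac{e^z-1}{z},
\]
equivalently $e_k(y^{*})=1/(k+1)!$ for all $k\ge 0$. This is legitimate because the $e_k$ are algebraically independent generators of $\Lambda$, so prescribing their values extends uniquely to a ring homomorphism $\Lambda\to\mathbb Q$; in particular $e_\lambda(y^{*})=\prod_j 1/(\lambda_j+1)!$, which is precisely the denominator appearing in \eqref{Toddforint}. Substituting $y^{*}$ into the Cauchy identity gives
\[
\sum_\lambda\frac{f_\lambda(x)}{(\lambda_1+1)!\cdots(\lambda_l+1)!}=\prod_{i,j}\frac{1}{1-x_iy_j^{*}}.
\]
Finally, setting $z=-x_i$ in the defining generating function yields $\prod_j(1-x_iy_j^{*})=(1-e^{-x_i})/x_i$, so $\prod_j 1/(1-x_iy_j^{*})=x_i/(1-e^{-x_i})$; multiplying over $i$ reproduces the Todd series, closing the argument.

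The conceptual content of the proof is recognizing that the denominators $\prod(\lambda_j+1)!$ are exactly the $e_\lambda$-values of the unique specialization that turns the Cauchy kernel into the Todd kernel. The only technical nuance is justifying that specialization; but since we work in the inverse-limit ring $\Lambda$ with $\{e_k\}$ as free polynomial generators, this is automatic and the whole identity is formal in each graded component, so no convergence issue arises. I expect the only genuine bookkeeping to be aligning the $\omega$-duality conventions for $f_\lambda$ and $e_\lambda$; everything else is a short calculation.
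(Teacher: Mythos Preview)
Your argument is correct and takes a genuinely different route from the paper's. The paper derives the formula from complex cobordism theory: it invokes a result from \cite{BV-2020} expressing any cobordism class $[M^{2n}]$ as $\sum_{|\lambda|=n} c^{\nu}_\lambda(M^{2n})\,[\Theta^\lambda]/(\lambda+1)!$, where $\Theta^k$ are theta divisors; applying the Todd genus (using $Td(\Theta^k)=(-1)^k$) and then identifying the normal monomial Chern numbers $c^{\nu}_\lambda$ with tangent data via the duality $E(t)H(-t)\equiv 1$ --- which is exactly the involution $\omega$ sending $m_\lambda$ to $f_\lambda$ --- produces the formula. Your proof is purely symmetric-function-theoretic: the Cauchy kernel for the dual pair $(f_\lambda,e_\lambda)$, together with the specialization $e_k\mapsto 1/(k+1)!$ (equivalently $E(z)\mapsto (e^z-1)/z$, hence $H(z)\mapsto z/(1-e^{-z})$), turns $\prod_{i,j}(1-x_iy_j)^{-1}$ into the Todd product directly. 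Your approach is considerably more elementary and self-contained, avoiding the cobordism ring, theta divisors, and the Chern--Dold character entirely. The paper's approach, on the other hand, embeds the identity in a geometric framework (the exponential of the formal group of complex cobordisms), and it is this framework that lets them pass uniformly to the companion formula with the $g_\lambda$ basis and to the signature formula, by replacing the exponential series $\beta$ with the Mischenko logarithm $\alpha$. Your Cauchy-kernel method can recover those as well via other specializations, but the cobordism viewpoint makes the transitions structural rather than case-by-case.
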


We introduce also a new basis $g_\lambda$ of symmetric functions over $\mathbb Z$ and show that $T_k$ can be expressed without factorials in the denominators:
\beq{Toddgint}
T_k= (-1)^k\sum_{\lambda: |\lambda|=k}\frac{g_\lambda}{(\lambda_1+1)\dots (\lambda_l+1)}.
\eeq
We have also similar new formula for the signature $\tau(M^{4k})$ of any oriented $4k$-dimensional manifold $M^{4k}$ (see formula (\ref{sig2}) below).

The proofs are based on the results from the paper \cite{BV-2020}, where we found an explicit formula for the exponential of the formal group of geometric complex cobordisms \cite{Nov-67, B-1970}.

As a corollary we have the equivalence of two formulas in Theorem 1.1 and a new proof of the Hirzebruch formula.

We discuss also the versions of Hirzebruch numbers, which appear in relation with Hirzebruch signature formula and Steenrod's cycle realisation problem. In the last section we derive two formulas for the Bernoulli numbers as the sums over partitions and discuss the relation with the classical von Staudt-Clausen result.

%

\section{Chern classes and symmetric functions}

For the theory of characteristic classes we refer to the book of Milnor and Stasheff \cite{MS}, for the theory of symmetric functions - to Macdonald \cite{Mac} and Stanley \cite{Stanley}.

Let $\xi$ be a complex vector bundle of rank $n$ over some manifold $M$, then
the Chern classes $c_k\in H^{2k}(M, \mathbb Z)$ can be viewed as the elementary symmetric functions of the Chern roots $x_1,\dots, x_n$ defined by the relation
\beq{ct}
c(t):=1+c_1 t+ c_2 t^2+\dots +c_n t^n=(1+x_1 t)\dots (1+x_n t).
\eeq
It is convenient here to get rid of the explicit dependence on $n$ by replacing the algebra $\Lambda_n$ of symmetric polynomials of variables $x_1,\dots, x_n$ by the algebra of {\it symmetric functions} $\Lambda$ defined as
the inverse limit of $\Lambda_n$ in the category of graded algebras
$$\Lambda=\lim_{\longleftarrow} \Lambda_{n}=\oplus_{k\geq 0}\Lambda^k.$$
By definition, symmetric function $f \in \Lambda^k$ of degree $k$ corresponds to an infinite sequence of elements $f_n \in \Lambda^k_n, \, n=1,2,\dots$ of degree $k$ such that
$$\varphi_{m,n} f_m = f_n,$$ where for any $m>n$ the homomorphisms
$\varphi_{m,n}: \Lambda_m \rightarrow \Lambda_n$ send $x_i$ with $i>n$ to zero. There is a canonical surjective homomorphism
\beq{can}
\varphi_n: \Lambda \to \Lambda_n
\eeq
sending all $x_i$ with $i>n$ to zero.

The algebra $\Lambda$ (considered over $\mathbb Z$) has several convenient bases of symmetric functions labeled by the partitions $\lambda=(\lambda_1,\dots, \lambda_l):$

\noindent {\it elementary symmetric functions} $$e_\lambda=e_{\lambda_1}\dots e_{\lambda_l}, \quad e_k=\sum_{i_1<\dots<i_k}x_{i_1}\dots x_{i_k},$$
{\it complete symmetric functions} $$h_\lambda=h_{\lambda_1}\dots h_{\lambda_l}, \quad h_k=\sum_{i_1\leq \dots\leq i_k}x_{i_1}\dots x_{i_k},$$
{\it monomial symmetric functions} $$m_\lambda=\sum_\alpha x_1^{\alpha_1}x_2^{\alpha_2}\dots x_l^{\alpha_l},$$
where the sum is taken over all {\it different} permutations $\alpha=(\alpha_1,\dots,\alpha_l)$ of  $\lambda=(\lambda_1,\dots, \lambda_l).$
(Note that the power sums $p_k = x_1^k+x_2^k + \dots,\,\, k=1,2, \dots$ generate $\Lambda$ over $\mathbb Q$, but not over $\mathbb Z$ since e.g. $e_2=\frac{1}{2}(p_1^2-p_2)$.)
 
 There is also less known basis of the so-called {\it ``forgotten" symmetric functions} 
 \beq{forg} 
 f_\lambda=\omega(m_\lambda), \quad n=|\lambda|:=\lambda_1+\dots +\lambda_l,
 \eeq
 where $\omega$ is the standard involution of $\Lambda$ uniquely defined by the property $\omega(e_k)=h_k$  (see Macdonald \cite{Mac}, Section I.2 ).
 (Note that Stanley \cite{Stanley} defines $f_\lambda$ with additional sign $(-1)^{n-l}$; we will follow Macdonald's convention).
 Lenart \cite{Lenart} used these symmetric functions in connection with Lazard's theorem in the theory of formal groups.
 
 To explain the nature of the involution $\omega$ consider the corresponding generating functions
 \beq{eh}
 E(t):=\sum_{k\geq 0}e_kt^k=\prod_{i\geq 1}(1+x_i t), \quad H(t):=\sum_{k\geq 0}h_kt^k=\prod_{i\geq 1}(1-x_i t)^{-1},
 \eeq
and note that they satisfy the duality relation
\beq{dual}
E(-t)H(t)\equiv 1.
\eeq
The generating function of the symmetric power-sum functions is
$$
P(t)=\sum_{k\geq 0}p_{k+1}t^k=\sum_{i\geq 1} \frac{x_i}{1-x_it}=\frac{d}{dt}\log \prod_{i\geq 1}(1-x_i t)^{-1}=\frac{H'(t)}{H(t)},
$$
so on the power sums $\omega$  acts by
$$
\omega(p_k)=(-1)^{k-1}p_k.
$$

The forgotten symmetric functions can be expressed via monomial symmetric function by the following combinatorial formula
\beq{comb}
(-1)^{n-l}f_\lambda=\sum_{\mu}a_{\lambda \mu}m_\mu,
\eeq
where $n=|\lambda|$ as before, $l=l(\lambda)$ is the number of parts of $\lambda=(\lambda_1,\dots,\lambda_l)$ and  $a_{\lambda \mu}$ equals the number of different permutations $\alpha=(\alpha_1,\dots,\alpha_l)$ of  $\lambda=(\lambda_1,\dots, \lambda_l)$, such that
\beq{cond}
 \{\mu_1+\dots + \mu_j: \,\, 1\leq j \leq l(\mu)\} \subseteq \{\alpha_1+\dots + \alpha_i: \, 1\leq i \leq l\}
\eeq
(see Stanley \cite{Stanley}, Ex.7.9). In particular, when $|\lambda|=2$ we have two partitions: $(2)$ and $(1,1)$ and two forgotten symmetric functions
$$
f_{(1,1)}=m_{(1,1)}+m_{(2)}, \quad f_{(2)}=-m_{(2)}
$$
(the formula $f_{(k)}=(-1)^{k-1} m_{(k)}$ is valid for all $k$). When $|\lambda|=3$ we have three forgotten symmetric functions
$$
f_{(1,1,1)}=m_{(1,1,1)}+m_{(2,1)}+m_{(3)}, \quad f_{(2,1)}=-m_{(2,1)}-2m_{(3)}, \quad f_{(3)}=m_{(3)}.
$$
The following result explains their relation with the Todd polynomials. 

More precisely, we define the {\it Todd symmetric functions} $T_k\in \Lambda^k \subset \Lambda$ uniquely by the property that the corresponding image $\varphi_k(T_k)\in \Lambda_k$ coincide with the Todd polynomials $T_k(c_1,\dots,c_k)$. Then Theorem 1.2 states that 
\beq{Toddfor2}
T_k=\sum_{\lambda: |\lambda|=k}\frac{f_\lambda}{(\lambda+1)!},
\eeq
where we used the notation $(\lambda+1)!:=(\lambda_1+1)!\dots (\lambda_l+1)!.$


{\bf Proof of Theorem 1.2.}
We use the following formula from our paper \cite{BV-2020} for the cobordism class for any $U$-manifold $M^{2n}$
\beq{our1}
[M^{2n}]=\sum_{\lambda: |\lambda|=n}c^{\nu}_\lambda(M^{2n})\frac{[\Theta^\lambda]}{(\lambda+1)!},
\eeq
where $\Theta^k$ is a smooth theta divisor of a general principally polarised abelian variety $A^{k+1}$ and $\Theta^\lambda=\Theta^{\lambda_1}\times \dots \times \Theta^{\lambda_l}.$
Here $c^{\nu}_\lambda(M^{2n})$ are the Chern numbers of the {\it normal} bundle of $M^{2n}$, corresponding to {\it monomial} symmetric functions $m_\lambda$ (see formula (10) in \cite{BV-2020}).

Since the Todd genus $Td(\Theta^k)=(-1)^k$ (see \cite{BV-2020}) the Todd genus \cite{Todd} for any $U$-manifold $M^{2n}$ can be expressed as
\beq{Todd}
Td(M^{2n})=(-1)^n\sum_{\lambda: |\lambda|=n}\frac{c^{\nu}_\lambda(M^{2n})}{(\lambda+1)!},
\eeq
Note that the Chern classes $c_k^\nu\in H^{2k}(M^{2n}, \mathbb Z)$ of the normal bundle $\nu(M^{2n})$ and usual Chern classes $c_l$ of the tangent bundle $TM^{2n}$ are related by
\beq{norm}
(1+c_1t+\dots+c_nt^n)(1+c_1^\nu t+\dots+c_m^{\nu}t^m)\equiv 1.
\eeq
Comparing this with the duality relation (\ref{dual}) written as $E(t)H(-t)\equiv 1$ we see that the Chern numbers of tangent and normal bundles (up to a sign) are related by involution $\omega$. Now since $f_\lambda=\omega(m_\lambda)$ we have
\beq{normfor}
c^{\nu}_\lambda(M^{2n})=(-1)^n \varphi_n(f_\lambda),
\eeq
which implies formula (\ref{Toddfor}).

In particular, we have 
$$
T_2=\frac{f_{(1,1)}}{2!\cdot 2!}+\frac{f_{(2)}}{3!}=\frac{1}{4}(m_{(1,1)}+m_{(2)})-\frac{1}{6} m_{(2)}=\frac{1}{12}(e_1^2+e_2),
$$
$$
T_3=\frac{f_{(1,1,1)}}{2!\cdot 2!\cdot 2!}+\frac{f_{(2,1)}}{3!\cdot 2!}+\frac{f_{(3)}}{4!}=\frac{m_{(1,1,1)}+m_{(2,1)}+m_{(3)}}{8}-\frac{2m_{(3)}+m_{(2,1)}}{12}+\frac{m_3}{24}$$
$$=\frac{1}{24}(m_{(2,1)}+3m_{(3)})=\frac{1}{24}e_1e_2
$$
in agreement with Hirzebruch.

\section{New interpretations of Hirzebruch numbers}

Note first that from Theorem 2.1 we have a new interpretation of the Hirzebruch numbers.

\begin{Theorem}
The denominator $\mathfrak h_k$ of the Todd polynomial $T_k$ is the least common multiple of the numbers $(\lambda+1)!, \,\,  \lambda \in \mathcal P_k$:
\beq{lcm1}
\mathfrak h_k=lcm((\lambda_1+1)!\dots (\lambda_l+1)!, \quad \lambda=(\lambda_1,\dots,\lambda_l) \in \mathcal P_k).
\eeq
\end{Theorem}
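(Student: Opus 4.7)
The plan is to derive Theorem 3.1 directly from the formula
\[
T_k=\sum_{\lambda:\,|\lambda|=k}\frac{f_\lambda}{(\lambda+1)!}
\]
established in Theorem 2.1, by exploiting the fact that the denominator of a rational element of $\Lambda^k\otimes\mathbb{Q}$ is a basis-independent invariant.

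First I would recall that $\{m_\lambda:|\lambda|=k\}$ is a $\mathbb{Z}$-basis of $\Lambda^k$, and that the involution $\omega$ is an automorphism of the graded ring $\Lambda$ over $\mathbb{Z}$. Hence $\{f_\lambda:|\lambda|=k\}=\{\omega(m_\lambda):|\lambda|=k\}$ is also a $\mathbb{Z}$-basis of $\Lambda^k$. Consequently, for any element $w\in\Lambda^k\otimes\mathbb{Q}$ written as $w=\sum_{\lambda}a_\lambda f_\lambda$ with $a_\lambda\in\mathbb{Q}$, the minimal positive integer $N$ with $Nw\in\Lambda^k$ is exactly $\mathrm{lcm}$ of the denominators of the $a_\lambda$. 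The same statement holds with the $\{f_\lambda\}$-basis replaced by the elementary basis $\{e_\lambda\}$, and the two resulting least common multiples coincide.

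Next I would apply this to $w=T_k$. In the expansion of Theorem 2.1 the coefficient of $f_\lambda$ is $1/(\lambda+1)!$, already in lowest terms, so the denominator of $T_k\in\Lambda^k\otimes\mathbb{Q}$ equals $\mathrm{lcm}\bigl((\lambda_1+1)!\cdots(\lambda_l+1)!:\lambda\in\mathcal{P}_k\bigr)$. On the other hand, the canonical map $\varphi_k:\Lambda^k\to\Lambda_k^k=\mathbb{Z}[c_1,\ldots,c_k]^{(k)}$ is an isomorphism of free $\mathbb{Z}$-modules (sending the basis $\{e_\lambda\}_{|\lambda|=k}$ to the monomial basis of the degree-$k$ part of $\mathbb{Z}[c_1,\ldots,c_k]$), and under this isomorphism $T_k\in\Lambda^k$ corresponds to the Hirzebruch Todd polynomial $T_k(c_1,\ldots,c_k)$. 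Therefore the denominator of $T_k(c_1,\ldots,c_k)$ as a polynomial in the Chern classes is the same as the denominator of $T_k$ as a symmetric function, which by the previous step is the claimed lcm.

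There is no real obstacle: the only non-trivial point is the $\mathbb{Z}$-basis property of $\{f_\lambda\}$, which follows at once from the fact that $\omega$ is a ring automorphism of $\Lambda$ over $\mathbb{Z}$ (a standard result of Macdonald, since $\omega$ is determined by $\omega(e_k)=h_k$ and both $\{e_k\}$ and $\{h_k\}$ are algebraically independent generators of $\Lambda$ over $\mathbb{Z}$). Once the basis-independence of the denominator is in hand, the theorem reduces to a one-line computation.
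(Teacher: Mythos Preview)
Your proposal is correct and follows exactly the route the paper intends: the paper's entire argument for Theorem~3.1 is the single remark that it follows from the forgotten-basis expansion of $T_k$ in Theorem~1.2, and you have simply made explicit the two points that justify this --- that $\{f_\lambda\}$ is a $\mathbb{Z}$-basis of $\Lambda^k$ (because $\omega$ is an automorphism over $\mathbb{Z}$) and that the denominator of an element of $\Lambda^k\otimes\mathbb{Q}$ is therefore the lcm of the denominators of its $f_\lambda$-coordinates, which here are the unit fractions $1/(\lambda+1)!$.
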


We are going to show now that $(\lambda_i+1)!$ can be replaced here simply by $(\lambda_i+1)$, which will lead to a new proof of the Hirzebruch formula (\ref{Hirz1}).

For this we use the results of our paper \cite{BV-2020} where it was in particular shown that the exponential of the formal group in complex cobordisms can be given by
the formula
\beq{CDnew}
\beta(v)=v+\sum_{n=1}^\infty[\Theta^n]\frac{v^{n+1}}{(n+1)!},
\eeq
where $\Theta^n$ is a smooth theta divisor of a general principally polarised abelian variety $A^{n+1}$, considered as the complex manifold of real dimension $2n.$
The inverse of this series is the logarithm of this formal group, which can be given explicitly by
the Mischenko series  \cite{Nov-67, BMN-71}:
\beq{Mis}
\alpha(u)=\beta^{-1}(u)=u+\sum_{n=1}^\infty[\mathbb CP^n]\frac{u^{n+1}}{n+1}.
\eeq

Denote $$a_n=\frac{[\mathbb CP^n]}{n+1}, \, b_n=\frac{[\Theta^n]}{(n+1)!}, \,$$
then we have two power series 
$$\alpha(u)=u+\sum_{n\geq 1}a_n u^{n+1}, \quad \beta(v)=v+\sum_{n\geq 1}b_n v^{n+1},$$
which are inverse to each other under substitution: $$\alpha(\beta(v))\equiv v, \,\, \beta(\alpha(u))\equiv u.$$

The coefficients of such series are known to be related by the inversion formulae, 
expressing each other as certain polynomials with integer coefficients:
$$
b_1=-a_1, \quad b_2=-a_2+2a_1^2, \quad b_3=-a_3+5a_1a_2-5a_1^3,$$
$$
b_4=-a_4+6a_1a_3+3a_2^2-21a_1^2a_2+14a_1^4.
$$
Remarkably these polynomials can be interpreted in terms of the combinatorics of the {\it  associahedra} (Stasheff polytopes), see \cite{Loday} for the details.
For example, the formula for $b_4$ says that $3D$ associahedron has 6 pentagonal and 3 quadrilateral faces, 21 edges and 14 vertices.
In particular, the last coefficient of $b_n$ (up to a sign) is always the {\it Catalan number} $C_n=\frac{1}{n+1}{ 2n \choose n}$, which equals the number of vertices of the corresponding associahedron. A related geometric interpretation in terms of strata in the Deligne-Mumford compactification of the moduli space $M_{0,n}$ is given by McMullen \cite{McMullen}.

These polynomials can also be expressed in terms of the {\it partial (exponential) Bell polynomials} $B_{n,k}$ (see e.g. \cite{Comtet}, Section 3.8)),
or, more explicitly, as
\beq{invb}
b_n=\frac{1}{(n+1)!} \sum_{k_1+2k_2+\dots+nk_n=n, \, k_i\geq 0} (-1)^k\frac{(n+k)!} {k_1!k_2!\dots k_n!}a_1^{k_1}a_2^{k_2}\dots a_n^{k_n},
\eeq
where $k=k_1+k_2+\dots+k_n$ (see e.g. \cite{Gessel}, formula (2.5.1)).

These formulas allow to write any monomial $b^\lambda=b_1^{m_1}\dots b_k^{m_k},$ where $m_j$ is the number of appearances of $j$ in $\lambda,$
as a linear combination of the corresponding $a^\mu$ with integer coefficients $C^\lambda_\mu$:
\beq{clm}
b^\lambda=\sum_{\mu} C^\lambda_\mu a^\mu.
\eeq
The matrix $C^\lambda_\mu$ is triangular (in lexicographic order) with diagonal elements $C^\lambda_\lambda=(-1)^{l(\lambda)}.$
Substituting this into (\ref{our1}) we can express any cobordism class as a polynomial 
\beq{our2}
[M^{2n}]=\sum_{\mu: |\mu|=n}A_\mu(M^{2n})a^\mu, \quad a_k=\frac{[\mathbb CP^k]}{k+1}
\eeq
with integer $A_\mu(M^{2n}):=\sum_{\lambda} C^\lambda_\mu c^{\nu}_\lambda(M^{2n}).$
Taking into account that Todd genus of $\mathbb CP^k$ is 1, we have a new formula for the Todd genus
\beq{Todd2}
Td(M^{2n})=\sum_{\mu: |\mu|=n}\frac{A_\mu(M^{2n})}{(\mu_1+1)\dots(\mu_l+1)}.
\eeq

Introduce now a new basis of symmetric functions over $\mathbb Z$ by
\beq{g}
g_\lambda=\sum_{\mu} C_\lambda^\mu f_\mu,
\eeq
where $f_\mu,$ as before, the forgotten symmetric functions. 

In particular, for $|\lambda|=2$ we have
\beq{g2}
g_{(1,1)}=f_{(1,1)}+2f_{(2)}=m_{(1,1)}-m_{(2)}, \quad g_{(2)}=-f_{(2)}=m_{(2)},
\eeq
for $|\lambda|=3$
\beq{g3}
g_{(1,1,1)}=-(f_{(1,1,1)}+2f_{(2,1)}+5f_{(3)})=-m_{(1,1,1)}+m_{(2,1)}-2m_{(3)},
\eeq
$$
g_{(2,1)}=f_{(2,1)}+5f_{(3)}=-m_{(2,1)}+3m_{(3)}, \quad g_{(3)}=-f_{(3)}=-m_{(3)},
$$
for $|\lambda|=4$
\beq{g4}
g_{(1,1,1,1)}=f_{(1,1,1,1)}+2f_{(2,1,1)}+4f_{(2,2)}+5f_{(3,1)}+14f_{(4)},
\eeq
$$
g_{(2,1,1)}=-(f_{(2,1,1)}+4f_{(2,2)}+5f_{(3,1)}+21f_{(4)}),\,\,\quad g_{(2,2)}=f_{(2,2)}+3f_{(4)},
$$
$$
g_{(3,1)}=f_{(3,1)}+6f_{(3)},\,\, g_{(4)}=-f_{(4)}.
$$
We are not aware of any appearances of these functions before, but they seem to be worthy of further study.

Combining all this we have a new formula for the Todd symmetric functions.

\begin{Theorem}
The Todd symmetric function $T_k$ can be expressed in terms of the new symmetric functions $g_\lambda$ as follows 
\beq{Toddnew}
T_k=(-1)^k\sum_{\lambda: |\lambda|=k}\frac{g_\lambda}{(\lambda_1+1)\dots (\lambda_l+1)}.
\eeq
\end{Theorem}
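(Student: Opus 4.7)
The approach is to combine formula (\ref{Todd2}), which expresses $Td(M^{2n})$ through the integer cobordism coefficients $A_\mu(M^{2n})$, with the definition (\ref{g}) of the $g_\mu$ in a way that makes the two visibly match.

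First I rewrite $A_\mu(M^{2n})$ in terms of tangent-bundle data. Using the identification (\ref{normfor}), $c^\nu_\lambda(M^{2n}) = (-1)^n \varphi_n(f_\lambda)$, I obtain
\begin{equation*}
A_\mu(M^{2n}) = \sum_\lambda C^\lambda_\mu c^\nu_\lambda(M^{2n}) = (-1)^n \varphi_n\Bigl(\sum_\lambda C^\lambda_\mu f_\lambda\Bigr) = (-1)^n \varphi_n(g_\mu),
\end{equation*}
the last step being the definition (\ref{g}) of $g_\mu$ (with the dummy index renamed). Substituting back into (\ref{Todd2}) and matching with $Td(M^{2n}) = \varphi_n(T_n)(M^{2n})$ yields
\begin{equation*}
\varphi_n(T_n) = (-1)^n \sum_{\mu:\,|\mu|=n} \frac{\varphi_n(g_\mu)}{(\mu_1+1)\cdots(\mu_l+1)}
\end{equation*}
as an identity of symmetric polynomials in the Chern roots $x_1,\dots,x_n$. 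Since the restriction of $\varphi_n$ to $\Lambda^n$ is an isomorphism $\Lambda^n\xrightarrow{\sim}\Lambda_n^n$ onto the top degree, this lifts to (\ref{Toddnew}) in $\Lambda^n$.

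The main technical point is the index bookkeeping: one must check that the matrix $C^\lambda_\mu$ in the $b$-to-$a$ transition (\ref{clm}) is the same matrix used in (\ref{g}) to define $g_\mu$, with the correct placement of summation/free indices so that the substitution above collapses cleanly to $\varphi_n(g_\mu)$. The explicit tables (\ref{g2})--(\ref{g4}) offer a convenient sanity check. A purely formal alternative, bypassing cobordism in the last step, is to expand $g_\mu$ in (\ref{Toddnew}) via (\ref{g}), match coefficients of $f_\lambda$ against (\ref{Toddforint}), and verify the resulting identity
\begin{equation*}
\sum_{\mu} \frac{C^\lambda_\mu}{(\mu_1+1)\cdots(\mu_l+1)} = \frac{(-1)^{|\lambda|}}{(\lambda_1+1)!\cdots(\lambda_l+1)!}
\end{equation*}
directly by applying the Todd ring homomorphism to $b^\lambda = \sum_\mu C^\lambda_\mu a^\mu$, using $Td(a_n) = 1/(n+1)$ and $Td(b_n) = (-1)^n/(n+1)!$.
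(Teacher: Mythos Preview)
Your proposal is correct and follows essentially the same route as the paper, which derives (\ref{Toddnew}) by ``combining all this'': formula (\ref{Todd2}), the definition $A_\mu=\sum_\lambda C^\lambda_\mu c^\nu_\lambda$, the identification (\ref{normfor}), and the definition (\ref{g}) of $g_\mu$. You have made the index bookkeeping explicit (correctly identifying $C_\mu^\lambda$ in (\ref{g}) with $C^\lambda_\mu$ from (\ref{clm}), as the examples (\ref{g2})--(\ref{g4}) confirm), and your closing alternative---applying the Todd homomorphism directly to $b^\lambda=\sum_\mu C^\lambda_\mu a^\mu$ and matching against (\ref{Toddforint})---is a clean equivalent that avoids passing through a manifold.
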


As a corollary we can drop factorials in our formula (\ref{lcm1}). \footnote{Alexey Ustinov provided us with a direct number-theoretic proof of this result.}

\begin{Theorem}
The denominator $\mathfrak h_k$ of the Todd polynomial $T_k$ is the least common multiple 
\beq{lcm2}
\mathfrak h_k=lcm((\lambda_1+1)\dots (\lambda_l+1), \quad \lambda=(\lambda_1,\dots,\lambda_l) \in \mathcal P_k).
\eeq
\end{Theorem}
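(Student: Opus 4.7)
The plan is to derive Theorem~3.3 directly from Theorem~3.2 by sandwiching the denominator of $T_k$ between two divisibilities. Setting $M_k := \operatorname{lcm}\{(\lambda_1+1)\cdots(\lambda_l+1) : \lambda \in \mathcal P_k\}$, I will prove both $\mathfrak h_k \mid M_k$ and $M_k \mid \mathfrak h_k$.

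For the upper bound $\mathfrak h_k \mid M_k$, I apply formula~(\ref{Toddnew}): multiplying through by $M_k$ clears every denominator on the right, so $M_k T_k$ is a $\mathbb Z$-linear combination of the $g_\lambda$. The transition matrix $C^\lambda_\mu$ expressing $g_\lambda$ in the $\mathbb Z$-basis $\{f_\mu\}$ is triangular with diagonal entries $(-1)^{l(\lambda)} = \pm 1$ (as recorded earlier in the section), and hence $\{g_\lambda\}$ is itself a $\mathbb Z$-basis of $\Lambda^k$. Expressing $M_k T_k$ further in the $\mathbb Z$-basis $\{e_\lambda\}$ therefore preserves integrality, so $M_k T_k$ has integer coefficients as a polynomial in the Chern classes; equivalently, the denominator $\mathfrak h_k$ of $T_k$ divides $M_k$.

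For the reverse divisibility $M_k \mid \mathfrak h_k$, I invoke Theorem~3.1 in the form $\mathfrak h_k = \operatorname{lcm}\{(\lambda_1+1)!\cdots(\lambda_l+1)! : \lambda \in \mathcal P_k\}$. Since $(\lambda_1+1)\cdots(\lambda_l+1)$ trivially divides $(\lambda_1+1)!\cdots(\lambda_l+1)!$ for every partition $\lambda$, and divisibility is preserved by least common multiples, $M_k$ divides the right-hand side and hence divides $\mathfrak h_k$. Combining with the upper bound yields $\mathfrak h_k = M_k$, which is precisely the claim.

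There is no substantive obstacle here: the entire argument reduces to the integrality of the basis change between $\{g_\lambda\}$ and $\{e_\lambda\}$ (both $\mathbb Z$-bases of $\Lambda^k$), plus the trivial divisibility $(\lambda_1+1)\cdots(\lambda_l+1) \mid (\lambda_1+1)!\cdots(\lambda_l+1)!$. The statement is therefore a short corollary of the structural formula~(\ref{Toddnew}) of Theorem~3.2 and the previously established Theorem~3.1; no number-theoretic analysis of $\lfloor k/(p-1)\rfloor$ is needed along this route.
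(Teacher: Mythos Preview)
Your argument is correct and follows the paper's route: Theorem~3.3 is stated there as a direct corollary of Theorem~3.2, using that $\{g_\lambda\}$ is a $\mathbb Z$-basis of $\Lambda^k$. One small redundancy: you invoke Theorem~3.1 for the lower bound $M_k\mid\mathfrak h_k$, but in fact Theorem~3.2 alone already gives both directions, since for any $\mathbb Z$-basis the denominator of $T_k$ is \emph{exactly} the lcm of the denominators of its coefficients; thus $\mathfrak h_k T_k\in\Lambda^k_{\mathbb Z}$ forces $(\lambda_1+1)\cdots(\lambda_l+1)\mid\mathfrak h_k$ for every $\lambda$, giving $M_k\mid\mathfrak h_k$ without appealing to the factorial version.
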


Now we can give a new proof of the Hirzebruch formula, proved in Atiyah-Hirzebruch \cite{AH}.

\begin{Theorem} (Hirzebruch)
The denominator $\mathfrak h_k$ of the Todd polynomial $T_k$ can be given as the product over prime numbers 
\beq{Hirz3}
\mathfrak h_k=\prod_{p \, prime}p^{\lfloor \frac{k}{p-1}\rfloor}.
\eeq
\end{Theorem}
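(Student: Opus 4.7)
The plan is to reduce everything to Theorem 3.3, which states that
\[
\mathfrak h_k=\mathrm{lcm}\bigl((\lambda_1+1)\cdots(\lambda_l+1):\lambda\in\mathcal P_k\bigr),
\]
and then verify prime by prime that the $p$-adic valuation of this lcm equals $\lfloor k/(p-1)\rfloor$. Since $\mathrm{lcm}$ distributes over $p$-adic valuations as a maximum, it suffices to show that for every prime $p$,
\[
\max_{\lambda\in\mathcal P_k}\sum_{i=1}^{l}v_p(\lambda_i+1)=\Bigl\lfloor\tfrac{k}{p-1}\Bigr\rfloor.
\]

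For the upper bound, I would set $n_i:=\lambda_i+1\ge 2$, so that $\sum_i(n_i-1)=k$. The key elementary inequality is
\[
n_i-1\ge p^{v_p(n_i)}-1=(p-1)\bigl(1+p+\cdots+p^{v_p(n_i)-1}\bigr)\ge(p-1)\,v_p(n_i),
\]
where the first inequality uses that $p^{v_p(n_i)}\mid n_i$ and $n_i\ge p^{v_p(n_i)}$, and the second uses that the geometric sum has $v_p(n_i)$ terms each at least $1$. Summing over $i$ gives $k\ge(p-1)\sum_i v_p(n_i)$, so $\sum_i v_p(n_i)\le\lfloor k/(p-1)\rfloor$.

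For the matching lower bound I would exhibit an explicit partition achieving equality. Write $k=m(p-1)+r$ with $0\le r<p-1$, and take $\lambda=((p-1)^m,1^r)\in\mathcal P_k$; then the multiset $\{\lambda_i+1\}$ consists of $m$ copies of $p$ together with $r$ copies of $2$, so $\sum_i v_p(\lambda_i+1)=m=\lfloor k/(p-1)\rfloor$ (this also works for $p=2$, where $r=0$ and every $\lambda_i+1$ equals $2$). Combining the two bounds gives $v_p(\mathfrak h_k)=\lfloor k/(p-1)\rfloor$ for every prime $p$, and taking the product over primes yields \eqref{Hirz3}.

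The argument is short and essentially elementary once Theorem~3.3 is in hand; the only substantive point is the sharpness of the bound $n-1\ge (p-1)v_p(n)$, and the observation that the partitions optimising the $p$-adic valuation can be chosen independently for each prime, because the lcm is computed one prime at a time. I do not anticipate a serious obstacle — the main leverage is the passage from factorials to plain linear factors provided by Theorem~3.2, after which the extremal problem becomes transparent.
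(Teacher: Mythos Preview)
Your proposal is correct and follows essentially the same route as the paper: reduce to Theorem~3.3 and then compute, prime by prime, the maximal $p$-adic valuation of $(\lambda_1+1)\cdots(\lambda_l+1)$ over $\lambda\in\mathcal P_k$, identifying the extremal partition $((p-1)^m,\dots)$. The paper merely asserts that ``it is easy to see'' the maximum equals $\lfloor k/(p-1)\rfloor$, whereas you spell out the upper bound via the inequality $n-1\ge(p-1)v_p(n)$; this is exactly the detail being elided.
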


Indeed, it is easy to see that  the integer part $m=\lfloor \frac{k}{p-1}\rfloor$ is the maximal power of a prime $p$ in the right hand side of the formula (\ref{lcm2}), 
coming from the partitions $\lambda=((p-1)^m, \dots),$ where $(p-1)^m$ means that $p-1$ is taken $m$ times and dots stand for the parts less than $p-1$.

\section{Other appearances of Hirzebruch numbers}

\subsection{Hirzebruch signature formula and $L$-polynomials}

One of the first famous results of Hirzebruch was the formula for the signature $L_k=L_k(M^{4k})$ of a real oriented manifold $M^{4k}$ in terms of the Pontrjagin classes $p_j$ \cite{Hirz-66}:
$$
L_0=1, \quad L_1=\frac{1}{3} p_1, \quad
L_2=\frac{1}{45} (7p_2-p_1^2), \quad
L_3=\frac{1}{945} (62 p_3-13 p_1p_2+2p_1^3),
$$
$$
L_4=\frac{1}{14175} (381p_4-71 p_1p_3-19p_2^2+22p_1^2p_2-3p_1^4).
$$
The generating function of these polynomials can be given by the Hirzebruch formula
\beq{signat}
\prod_{i=1}^n\frac{tx_i}{\tanh tx_i}=\sum_{k=0}^\infty L_k(p_1,\dots, p_n) t^{2k},
\eeq
where the Pontrjagin classes $p_k$ are the elementary symmetric functions $e_k$ of $x_1^2,\dots, x_n^2$.

\begin{Theorem} (Hirzebruch)
The denominator $\mu(L_k)$ of the $L$-polynomial $L_k$ can be given as the product over prime numbers 
\beq{Hirz_L}
\mu(L_k)=\prod_{p \, prime, \, p\geq 3}p^{\lfloor \frac{2k}{p-1}\rfloor}
\eeq
and is related to the corresponding Hirzebruch number by the formula
 \beq{Hirz_L2}
\mu(L_k)=2^{-2k} \mathfrak h_{2k}=2^{-2k-1}\mathfrak h_{2k+1}.
\eeq
\end{Theorem}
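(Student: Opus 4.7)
The plan is to write $L_k$ explicitly in the monomial basis of symmetric functions in the Pontrjagin roots $y_i=x_i^2$, read off its denominator prime-by-prime, and deduce \eqref{Hirz_L2} from the Hirzebruch formula \eqref{Hirz3}. The starting point is the elementary power-series identity
$$x\coth x\;=\;\tfrac12\!\left[\frac{2x}{1-e^{-2x}}+\frac{-2x}{1-e^{2x}}\right],$$
verified by putting the right side over the common denominator $2\cosh(2x)-2=4\sinh^2 x$ and using $\sinh(2x)=2\sinh x\cosh x$; in the paper's notation it reads $L(x)=\tfrac12[T(2x)+T(-2x)]$. Extracting the even part gives $L(x)=\sum_{k\geq 0}4^{k}t_{2k}x^{2k}$ with $t_{2k}=B_{2k}/(2k)!$, and taking the multivariable product and collecting monomials by their orbit yields
$$L_k \;=\; 4^k\!\sum_{\lambda\in\mathcal P_k}\!\Bigl(\prod_{j}t_{2\lambda_j}\Bigr)\,m_\lambda(y).$$
Since both $\{m_\lambda(y)\}$ and $\{P_\mu=e_\mu(y)\}$ are $\mathbb Z$-bases of the ring of symmetric functions in $y$, their transition matrix is unimodular, so
$$\mu(L_k)\;=\;lcm_{\lambda\in\mathcal P_k}\,\mathrm{denom}\Bigl(4^k\prod_j t_{2\lambda_j}\Bigr).$$

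I would then evaluate this lcm prime by prime. For an odd prime $p$, von Staudt-Clausen gives $v_p(B_{2m})=-1$ exactly when $(p-1)\mid 2m$ and $v_p(B_{2m})\geq 0$ otherwise; combined with Legendre's formula $v_p((2m)!)=(2m-s_p(2m))/(p-1)$ and the congruence $n\equiv s_p(n)\pmod{p-1}$ one obtains the per-part bound
$$-v_p(t_{2m})\;\leq\;\bigl\lfloor 2m/(p-1)\bigr\rfloor,$$
with equality at $m=(p-1)/2$. Summing over the parts of $\lambda$ and invoking the superadditivity $\sum_j\lfloor 2\lambda_j/(p-1)\rfloor\leq\lfloor 2k/(p-1)\rfloor$ gives an upper bound of $\lfloor 2k/(p-1)\rfloor$, attained by the partition $\lambda=\bigl((p-1)/2\bigr)^{M}$ padded with $1$'s, $M=\lfloor 2k/(p-1)\rfloor$. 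For $p=2$ an analogous computation shows the corresponding maximum is $0$, since the prefactor $4^k$ exactly absorbs the $2$-adic denominators produced by $\prod_j t_{2\lambda_j}$. Together these give \eqref{Hirz_L}.

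The equalities \eqref{Hirz_L2} then follow by an elementary rearrangement of \eqref{Hirz3}: isolating the $p=2$ factor gives $\mathfrak h_{2k}=2^{2k}\prod_{p\geq 3}p^{\lfloor 2k/(p-1)\rfloor}=2^{2k}\mu(L_k)$, and since $p-1$ is even for every odd prime $p$ one has $\lfloor(2k+1)/(p-1)\rfloor=\lfloor 2k/(p-1)\rfloor$, so $\mathfrak h_{2k+1}=2\mathfrak h_{2k}$ and thus $2^{-2k-1}\mathfrak h_{2k+1}=2^{-2k}\mathfrak h_{2k}$. The hardest step in this plan is the per-part inequality $-v_p(t_{2m})\leq\lfloor 2m/(p-1)\rfloor$: it requires the von Staudt-Clausen denominator of $B_{2m}$ to precisely compensate the gap between $v_p((2m)!)$ and $\lfloor 2m/(p-1)\rfloor$ dictated by the digit-sum identity $n\equiv s_p(n)\pmod{p-1}$, and this tight matching is what forces the odd-prime-only shape of the $L$-denominators.
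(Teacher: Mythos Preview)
Your argument is correct, but it takes a genuinely different route from the paper's. The paper does not touch Bernoulli numbers or von Staudt--Clausen at all: it first proves Theorem~4.2, namely
\[
\mu(L_k)=\mathrm{lcm}\bigl((2\lambda_1+1)\cdots(2\lambda_l+1):\ \lambda\in\mathcal P_k\bigr),
\]
by pushing the cobordism formula \eqref{our2} through the surjection $\Omega_U\to\Omega_{SO}\otimes\mathbb Q$ and using $\tau(\mathbb CP^{2j})=1$, $\tau(\mathbb CP^{2j+1})=0$; formula \eqref{Hirz_L} then drops out by the same elementary counting that deduced \eqref{Hirz3} from \eqref{lcm2}. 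Your approach instead expands $L_k$ in the monomial basis as $4^k\sum_\lambda\bigl(\prod_j B_{2\lambda_j}/(2\lambda_j)!\bigr)m_\lambda(y)$, invokes the unimodularity of the $m\leftrightarrow e$ transition to identify $\mu(L_k)$ with the lcm of these coefficients' denominators, and then computes the $p$-adic valuations via von Staudt--Clausen and Legendre. The trade-off is clear: your proof is self-contained and elementary (no cobordism input), but it imports the full strength of von Staudt--Clausen and does not yield the clean arithmetic intermediate \eqref{lcm3}; the paper's proof is shorter and produces \eqref{lcm3} as a structural byproduct, but rests on the cobordism machinery of \cite{BV-2020}. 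Two small remarks on your write-up: the inequality $\sum_j\lfloor 2\lambda_j/(p-1)\rfloor\le\lfloor 2k/(p-1)\rfloor$ is the standard \emph{sub}additivity of the floor (you wrote ``superadditivity''), and in your $p=2$ step it is worth recording explicitly that $-v_2\bigl(4^k\prod_j t_{2\lambda_j}\bigr)=l(\lambda)-\sum_j s_2(\lambda_j)\le 0$, which is the computation making that case work.
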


The proof is also given in Atiyah-Hirzebruch \cite{AH}. We provide now a simpler proof based on our formula (\ref{our2}).

\begin{Theorem} 
The denominator $\mu(L_k)$ of the $L$-polynomial $L_k$ is the least common multiple of the numbers
\beq{lcm3}
\mu(L_k)=lcm((2\lambda_1+1)\dots (2\lambda_l+1), \quad \lambda=(\lambda_1,\dots,\lambda_l) \in \mathcal P_k).
\eeq
\end{Theorem}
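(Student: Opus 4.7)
The plan is to mirror the derivation of Theorem 3.3, replacing the Todd genus by the signature genus $\tau\colon \Omega^U_*\to\mathbb Z$. The inputs are the cobordism expansion (\ref{our2}) and the fact that $\tau(\mathbb{CP}^n)=1$ for even $n$ and $0$ for odd $n$, giving $\tau(a_j)=1/(j+1)$ for even $j$ and $\tau(a_j)=0$ otherwise. Applying $\tau$ to (\ref{our2}), only partitions $\mu\in\mathcal P_{2k}$ with all parts even contribute; writing such a $\mu$ as $2\lambda$ with $\lambda\in\mathcal P_k$ yields the signature formula
\[
\tau(M^{4k})=\sum_{\lambda:\,|\lambda|=k}\frac{A_{2\lambda}(M^{4k})}{(2\lambda_1+1)\cdots(2\lambda_l+1)},
\]
which is the formula (\ref{sig2}) alluded to in the introduction.

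Next I would lift this to a symmetric-function identity. Combining (\ref{normfor}) with the definition (\ref{g}) of $g_\mu$ gives $A_\mu(M^{4k}) = \varphi_{2k}(g_\mu)[M^{4k}]$, so the signature formula is the evaluation of a universal characteristic-number identity; since Chern numbers separate $U$-cobordism classes modulo torsion, it lifts to
\[
L_k=\sum_{\lambda:\,|\lambda|=k}\frac{g_{2\lambda}}{(2\lambda_1+1)\cdots(2\lambda_l+1)}
\]
in $\Lambda^{2k}\otimes\mathbb Q$, where $L_k$ is the $L$-symmetric function defined from the generating function (\ref{signat}) in complete analogy with $T_k$, and lies in the Pontrjagin subring $R\subset\Lambda$ generated by $p_k=e_k(x_1^2,x_2^2,\dots)$. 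Because $\{g_\mu\}_{|\mu|=2k}$ is a $\mathbb Z$-basis of $\Lambda^{2k}$ and each coefficient $1/\prod(2\lambda_i+1)$ is in lowest terms, the denominator of $L_k$ in the $g$-basis equals exactly $\mathrm{lcm}\bigl((2\lambda_1+1)\cdots(2\lambda_l+1):\lambda\in\mathcal P_k\bigr)$.

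The main obstacle is the last step: identifying this common denominator with $\mu(L_k)$, which is by definition the denominator of $L_k$ when expanded in monomials in the Pontrjagin classes $p_j=e_j(y)$ with $y_i=x_i^2$. Since $L_k\in R^{2k}\otimes\mathbb Q$, its coefficients in the monomial basis $\{m_\sigma(x)\}_{|\sigma|=2k}$ of $\Lambda^{2k}$ vanish unless $\sigma$ has all even parts, so the denominator of $L_k$ computed in $\{m_\sigma(x)\}$ coincides with the denominator computed in $\{m_\nu(y)\}_{|\nu|=k}$, and thus with $\mu(L_k)$. Finally, the $g$- and $m$-bases of $\Lambda^{2k}$ are two $\mathbb Z$-bases of the same free $\mathbb Z$-module, so they are related by an invertible integer change of basis and the corresponding denominators of $L_k$ agree. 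Together these observations give $\mu(L_k)=\mathrm{lcm}\bigl((2\lambda_1+1)\cdots(2\lambda_l+1):\lambda\in\mathcal P_k\bigr)$, which is exactly (\ref{lcm3}).
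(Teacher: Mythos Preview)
Your proof is correct and follows the same route as the paper: derive the signature expansion (\ref{sig2}) from (\ref{our2}) using $\tau(\mathbb{CP}^{2j})=1$, $\tau(\mathbb{CP}^{2j+1})=0$, lift it to a symmetric-function identity in the $g$-basis, and read off the denominator. The paper's own proof stops at (\ref{sig2}) and says ``the claim now follows in the same way as in the Todd case''; you have made that step explicit, and in addition you supply a point the paper leaves implicit, namely that the Pontrjagin subring $R=\mathbb Z[p_1,p_2,\dots]$ sits in $\Lambda$ as a direct summand (with $\mathbb Z$-basis $\{m_{2\nu}(x)\}$), so the denominator $\mu(L_k)$ defined via the $p_j$'s coincides with the denominator computed in any $\mathbb Z$-basis of $\Lambda^{2k}$.
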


\begin{proof}
We use the fact that the natural homomorphism of complex cobordisms $\Omega_U \to \Omega_{SO}$ to the real oriented cobordisms is surjective modulo torsion (see Milnor \cite{Mil-60} and Novikov \cite{Nov-60, Nov-62}). The kernel of this homomorphism is the ideal in $\Omega_U$ generated by the cobordism classes of the manifolds of dimensions $4k+2, \, k=0, 1,\dots.$
This means that one can choose a representative of cobordism class $[M^{4n}]\in \Omega_{SO}$ module torsion among the $U$-manifolds.
Since the signature $\tau(M^{4n})$ is cobordism-invariant, well-defined modulo torsion and $\tau(M^{4k+2})=0$ we can use our formula (\ref{our2}) to compute it. Taking into account that the signatures $\tau(\mathbb CP^{2k})=1, \, \tau(\mathbb CP^{2k+1})=0$, we have
\beq{sig2}
\tau(M^{4n})=\sum_{\mu: |\mu|=n}\frac{A_\mu(M^{4n})}{(2\mu_1+1)\dots(2\mu_l+1)}.
\eeq
The claim now follows in the same way as in Todd case.
\end{proof}

It is easy to see that this  least common multiple equals the product in the right hand side of Hirzebruch formula (\ref{Hirz_L}), so as a corollary we have another proof of this formula.

\subsection{Hirzebruch numbers in Steenrod's cycle realisation problem}

The following result was found by Buchstaber in \cite{B-1969} in relation with the cycle realisation problem.

Let $X$ be a cell complex and $x\in H_n(X, \mathbb Z)$ be a cycle. Steenrod asked when $x$ can be realised as an image of some manifold, so there exists a compact oriented smooth manifold $N^n$ and a map $f: N^n \to X$ such that $f_*([N^n])=x.$

From the results of Thom \cite{Thom} it follows that up to some multiple, depending only on $n$, this is always possible. The question is what is the minimal multiple $\nu_{n}$ required such that $\nu_{n} x$ can be realised by a submanifold.

Consider the following {\it Buchstaber numbers}, which can be considered as a modification of the Hirzebruch numbers:
\beq{buch}
\mathfrak b_n=\prod_{p \, prime, \, p\geq 3}p^{\lfloor \frac{n-2}{2(p-1)}\rfloor}.
\eeq
$$
\mathfrak b_1=\mathfrak b_2=\mathfrak b_3=\mathfrak b_4=1, \quad \mathfrak b_5=\mathfrak b_6=\mathfrak b_7=\mathfrak b_8=3,
$$
$$
\mathfrak b_9=\mathfrak b_{10}=\mathfrak b_{11}=\mathfrak b_{12}=45,\,\, \mathfrak b_{13}=\mathfrak b_{14}=\mathfrak b_{15}=\mathfrak b_{16}=3^3\times 5\times 7=945.
$$
They are related to the Hirzebruch numbers by the formula
\beq{buch2}
\mathfrak b_{4k+1}=\mathfrak b_{4k+2}=\mathfrak b_{4k+3}=\mathfrak b_{4k+4}=2^{-2k}\mathfrak h_{2k}=\mu(L_k).
\eeq

\begin{Theorem} (Buchstaber \cite{B-1969})
For every cycle $x\in H_n(X, \mathbb Z)$ the cycle $\mathfrak b_n x$ can be realised by a manifold.
\end{Theorem}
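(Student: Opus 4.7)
The plan is to transplant the cobordism-theoretic denominator analysis used in the proof of Theorem 4.2 from the signature map to the Steenrod realization problem. I would first reformulate Steenrod's question in bordism-theoretic terms: a cycle $x\in H_n(X,\mathbb{Z})$ is realizable by a manifold if and only if it lies in the image of the edge homomorphism $\mu_{SO}\colon \Omega_{SO}^n(X)\to H_n(X,\mathbb{Z})$ coming from the Atiyah--Hirzebruch spectral sequence for oriented bordism. By Thom's theorem this image has finite index in $H_n(X,\mathbb{Z})$, and the task reduces to bounding this index, uniformly over $X$ and $x$, by $\mathfrak{b}_n$.

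I would next lift through complex cobordism, following the same route as in the proof of Theorem 4.2. The forgetful homomorphism $\Omega_U\to\Omega_{SO}$ is surjective modulo torsion with kernel containing every generator of real dimension $4k+2$, so modulo torsion the realization question descends to a question about complex cobordism classes. Applying formula (\ref{our2}), an arbitrary complex bordism class decomposes as $[M^{2m}]=\sum_{|\mu|=m}A_\mu(M^{2m})a^\mu$ with integer $A_\mu$ and $a_j=[\mathbb{CP}^j]/(j+1)$. Under the projection to $\Omega_{SO}$ modulo torsion the factors $a_{2j+1}$ die, leaving only products of $[\mathbb{CP}^{2j}]/(2j+1)$. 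The common denominator of these surviving rational expressions is precisely $\mu(L_k)=\mathrm{lcm}\bigl(\prod(2\nu_i+1)\colon\nu\vdash k\bigr)$ from Theorem 4.2, and multiplying a cycle by this quantity clears all denominators in a universal expansion of its preimage, producing a genuine oriented manifold representative mapping to $X$.

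The principal obstacle is calibrating the value of $k$ to the dimension $n$, that is, establishing the four-step periodicity $\mathfrak{b}_{4k+1}=\mathfrak{b}_{4k+2}=\mathfrak{b}_{4k+3}=\mathfrak{b}_{4k+4}=\mu(L_k)$. For $n\equiv 1,2,3\pmod 4$ this bound matches the denominator coming from the nearest multiple of $4$ below $n$, and the argument above applies directly. For $n=4(k+1)$ the stated bound $\mu(L_k)$ is sharper than the naive $\mu(L_{k+1})$ that would arise from working in dimension $n$ itself; this one-step improvement reflects the fact that $[\mathbb{CP}^{2(k+1)}]$ has signature $1$ and thus contributes no new denominator to realizations in dimension $4(k+1)$. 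Justifying this sharpening carefully---either via an explicit geometric construction joining realizations across four consecutive dimensions, as in Buchstaber's original argument in \cite{B-1969}, or via a Postnikov-tower analysis of $MSO$---is where most of the technical work lies; the contribution of the present paper is to supply the cleaner cobordism-theoretic input (formula (\ref{our2})) for the denominator estimate itself.
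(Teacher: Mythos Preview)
The paper does not prove this theorem: it is quoted from \cite{B-1969} as a known result, with only the numerical link $\mathfrak b_{4k+j}=\mu(L_k)$ of (\ref{buch2}) noted. There is therefore no in-paper argument to compare against.

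Your sketch, however, has a genuine gap. Formula (\ref{our2}) expresses a class $[M^{2n}]\in\Omega_U=\Omega_U^*(\mathrm{pt})$ as an integral combination of products $a^\mu$ with $a_j=[\mathbb{CP}^j]/(j+1)$; this is a statement about the coefficient ring. Steenrod's problem is about the edge map $\Omega^{SO}_n(X)\to H_n(X,\mathbb Z)$ for an arbitrary space $X$, whose cokernel is controlled by the differentials of the Atiyah--Hirzebruch spectral sequence for $MSO_*(X)$---precisely the subject of \cite{B-1969}, titled ``Modules of differentials of the Atiyah--Hirzebruch spectral sequence''. Your step ``multiplying a cycle by this quantity clears all denominators in a universal expansion of its preimage'' is where the argument breaks: formula (\ref{our2}) neither produces a rational bordism preimage of $x$ over $X$, nor bounds any AHSS differential acting on $x$. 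Knowing that $\mu(L_k)$ is the exact denominator of $L_k$ (a computation at a point) does not by itself tell you that $\mu(L_k)\cdot x$ lies in the image of $\Omega^{SO}_n(X)$; that requires estimating the orders of the relevant stable cohomology operations, which is the actual content of Buchstaber's 1969 argument and is not something (\ref{our2}) supplies. You implicitly acknowledge this by deferring ``most of the technical work'' to \cite{B-1969}, but then what remains is not a proof but a restatement of the numerical identity (\ref{buch2}).
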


Recently Rozhdestvenskii \cite{Rozh} improved this result by showing that the same is true if we replace $n-2$ in the formula for $\mathfrak b_n$ by $n-3$, but the minimal value of multiple in Steenrod's problem for general $n$ is still unknown.

\section{Formulas for the Bernoulli numbers and von Staudt-Clausen theorem}

Note that the Todd polynomials are essentially {\it Bernoulli polynomials of higher order}, which were introduced by N\"orlund \cite{Nor} by the generating function
\beq{bernoul}
\prod_{i=1}^n\frac{tx_i}{e^{tx_i}-1}=\sum_{k=0}^\infty B_k^{(n)}(x_1,\dots, x_n) \frac{t^k}{k!}.
\eeq
Namely, assuming that $c_i,$ as before, are elementary symmetric functions $e_i$ of $x_1,\dots, x_n$, we have that
$$T_k(c_1,\dots,c_n)=(-1)^kB_k^{(n)}(x_1,\dots, x_n)/k!, \quad k\leq n$$
 (see \cite{Hirz-66} and formula (\ref{bern}) above).
  In particular, if $x_1=1$ and $x_i=0, \, i>1$ we have
 \beq{TB}
 T_k(1,0,0,\dots)=(-1)^k\frac{B_k}{k!},
 \eeq
where $B_j$ are the Bernoulli numbers determined by the generating function
$$
\frac{x}{e^x-1}=\sum_{n}B_n\frac{x^n}{n!}:
$$
$$B_0=1, B_2=\frac{1}{6}, B_4 = - \frac{1}{30}, B_6 =  \frac{1}{42}, B_8 = -\frac{1}{30}, B_{10} =  \frac{5}{66}, B_{12}= -\frac{691}{2730}$$
(all $B_k$ with odd $k$ are zero, except $B_1=-\frac{1}{2}$).

A natural question is to look at their denominators.
In 1840 von Staudt \cite{Staudt} and Clausen \cite{Clausen} independently found a remarkable result that
$$
B_{2n}+\sum_{p-1| 2n}\frac{1}{p}\in \mathbb Z,
$$
where the sum is taken over all primes $p$, such $p-1$ is a divisor of $2n.$ 
This means that the denominator $q_{2n}$ of the Bernoulli number $B_{2n}=\frac{p_{2n}}{q_{2n}}$ can be given as the product of these primes:
$$
q_{2n}=\prod_{p-1| 2n} p.
$$
This is reminiscent of the Hirzebruch formula (although Hirzebruch seems to be not aware of this link at the time).

The proof of von Staudt-Clausen formula (due to von Staudt) is based on the formula for the Bernoulli number
\beq{vSC}
B_{m}=\sum_{k=0}^{m} \frac{(-1)^k k! S(m,k)}{k+1},
\eeq
where $S(m,k)$ are the Stirling numbers of second kind \cite{Stanley}. We should mention also that there are several other explicit formulas for the Bernoulli numbers as the sums of fractions (see \cite{Gould}).


As a corollary of our results we have one of such representations due to Jordan (see \cite{Jordan}, p. 247, formula (5)), see also \cite{Shirai_2001},\cite{Woon}. \footnote{We are grateful to Alexey Ustinov and to Christophe Vignat for providing these references.}
 
 \begin{Theorem} Bernoulli number $B_k$ can be represented as the following sum over all partitions $\lambda=(\lambda_1,\dots, \lambda_l)$ of $k$:
 \beq{Bernnew}
B_k=k!\sum_{\lambda: |\lambda|=k}\frac{(-1)^{l}N(\lambda)}{(\lambda_1+1)!\dots (\lambda_l+1)!},
\eeq
where $N(\lambda)$ is the number of all different permutations of $(\lambda_1,\dots, \lambda_l)$.
\end{Theorem}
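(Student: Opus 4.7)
The plan is to derive the formula by specializing Theorem 1.2 at the particular point $x_1=1$, $x_i=0$ for $i\geq 2$, and comparing with the classical identity (\ref{TB}) that relates the Todd polynomial at this point to the Bernoulli number $B_k$.

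First I would recall that by (\ref{TB}),
$$T_k(1,0,0,\dots)=(-1)^k\frac{B_k}{k!},$$
so it suffices to evaluate the right-hand side of (\ref{Toddforint}) at $x_1=1$, $x_i=0$ for $i\geq 2$. Under this specialisation, the monomial symmetric function $m_\mu$ is killed whenever $\mu$ has more than one part, while $m_{(k)}(1,0,\dots)=1$. In other words, only the one-row partition $\mu=(k)$ of $k$ survives.

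Next I would feed this into the combinatorial expansion (\ref{comb}) of the forgotten symmetric functions:
$$(-1)^{k-l}f_\lambda=\sum_\mu a_{\lambda\mu}m_\mu,$$
which gives $f_\lambda(1,0,\dots)=(-1)^{k-l}\,a_{\lambda,(k)}$ for any partition $\lambda$ of $k$. The crucial observation is that the containment condition (\ref{cond}) for $\mu=(k)$ reduces to $\{k\}\subseteq\{\alpha_1,\alpha_1+\alpha_2,\dots,\alpha_1+\dots+\alpha_l\}$, which is automatically satisfied because the total sum $\alpha_1+\dots+\alpha_l$ equals $k$ for every rearrangement $\alpha$ of $\lambda$. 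Hence $a_{\lambda,(k)}$ equals the total number $N(\lambda)$ of distinct permutations of $\lambda$, so
$$f_\lambda(1,0,0,\dots)=(-1)^{k-l}N(\lambda).$$

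Finally I would plug this into (\ref{Toddforint}), equate with $(-1)^kB_k/k!$, and cancel the sign $(-1)^k$:
$$(-1)^k\frac{B_k}{k!}=\sum_{\lambda:|\lambda|=k}\frac{(-1)^{k-l(\lambda)}N(\lambda)}{(\lambda_1+1)!\cdots(\lambda_l+1)!},$$
which rearranges directly into (\ref{Bernnew}). The argument has no real obstacle; the only point that needs care is the combinatorial identification $a_{\lambda,(k)}=N(\lambda)$, which follows from the trivial observation that the single required partial sum $k$ is always present, so that every permutation of $\lambda$ contributes one.
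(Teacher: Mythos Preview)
Your proposal is correct and follows essentially the same route as the paper: specialise Theorem~1.2 at $x_1=1,\,x_i=0\,(i\geq2)$, use (\ref{TB}), and evaluate $f_\lambda(1,0,\dots)$ via the combinatorial expansion (\ref{comb}) by noting that only $m_{(k)}$ survives and that $a_{\lambda,(k)}=N(\lambda)$ since condition (\ref{cond}) is vacuous for $\mu=(k)$. If anything, your handling of the sign $(-1)^{k-l}$ is slightly more explicit than the paper's own write-up.
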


\begin{proof}
By Theorem  1.2  $T_k$ can be expressed in terms of the forgotten symmetric functions as follows 
$$
T_k=\sum_{\lambda: |\lambda|=k}\frac{f_\lambda}{(\lambda_1+1)!\dots (\lambda_l+1)!},
$$
so due to (\ref{TB}) we need only to compute the values $f_\lambda(1,0,0,\dots).$
For this we use formula (\ref{comb}) expressing $f_\lambda$ via monomial symmetric functions $m_\mu$: 
$$
(-1)^{|\lambda|-l}f_\lambda=\sum_{\mu}a_{\lambda \mu}m_\mu,
$$
where $a_{\lambda \mu}$ equals the number of different permutations $\alpha=(\alpha_1,\dots,\alpha_l)$ of  $\lambda=(\lambda_1,\dots, \lambda_l)$, such that
\beq{mual}
 \{\mu_1+\dots + \mu_j: \,\, 1\leq j \leq l(\mu)\} \subseteq \{\alpha_1+\dots + \alpha_i: \, 1\leq i \leq l\}.
\eeq
Since $m_\mu (1,0,0,\dots)=0$ for all $\mu\neq (k)$ and $m_{(k)}(1,0,0,\dots)=1$, we need only to find the coefficient $a_{\lambda (k)}$.
From (\ref{mual}) we see that $a_{\lambda (k)}$  is just the number of all different permutations of $(\lambda_1,\dots, \lambda_l):$
$$
f_\lambda(1,0,0,\dots)=a_{\lambda (k)}=N(\lambda)=\frac{l(\lambda)!}{m_1(\lambda)!\dots m_l(\lambda)!},
$$
where $m_j(\lambda)$ is the number of appearances of $j$ in $(\lambda_1,\dots, \lambda_l),$ which leads to formula (\ref{Bernnew}).
\end{proof}

In particular, for $k\leq 4$ we have
$$
B_2=2! \left(\frac{-1}{3!}+\frac{1}{2!\cdot 2!}\right)=\frac{1}{6}, \,\,\quad
B_3=3!\left(-\frac{1}{4!}+\frac{2}{3!\cdot 2!}-\frac{1}{2!\cdot 2!\cdot 2!}\right)=0,\,\,
$$
$$
B_4=4!\left(-\frac{1}{5!}+\frac{2}{4!\cdot 2!}-\frac{1}{3!\cdot 3!} -\frac{3}{3!\cdot 2!\cdot 2!}+\frac{1}{2!\cdot 2!\cdot 2!\cdot 2!}\right)=-\frac{1}{30}.
$$
As a corollary we can claim that the denominators of the Bernoulli numbers divide the corresponding Hirzebruch numbers (which is of course much weaker than the von Staudt-Clausen result). 

Our formula (\ref{Toddnew}) provides another representation for the Bernoulli numbers as the sum over partitions, which seems to be new \footnote{Tom Copeland has recently informed us that this formula can be derived from the known results about partition polynomials, see his comments on MathOverflow \cite{Copeland}, where one can find also new  interesting relations.}:
 \beq{Bernnew2}
B_k=k!\sum_{\lambda: |\lambda|=k}\frac{G(\lambda)}{(\lambda_1+1)\dots (\lambda_l+1)}, \quad G(\lambda)=\sum_{\mu:|\mu|=k} C_\lambda^\mu N(\mu)
 \eeq
   and $C_\lambda^\mu$ are defined by (\ref{clm}). However, to make it completely explicit we need a good combinatorial formula for $G(\lambda)=g_{\lambda}(1,0,0,\dots)$, which is one more reason to study these new symmetric functions.

For small $k$ from (\ref{g2}), (\ref{g3}), (\ref{g4}) we have the following representations of the Bernoulli numbers
$$
B_2=2!\left(\frac{1}{3}-\frac{1}{2\cdot 2}\right)=\frac{1}{6}, \quad B_3=3! \left(-\frac{1}{4}+\frac{3}{3\cdot 2}-\frac{2}{2\cdot2\cdot 2}\right)=0,
$$
$$
B_4=4! \left(\frac{1}{5}-\frac{4}{4\cdot 2}-\frac{2}{3\cdot 3}+\frac{10}{2\cdot 2\cdot 3}-\frac{5}{2\cdot 2\cdot 2\cdot 2}\right)=-\frac{1}{30}.
$$

\section{Acknowledgements}

We are very grateful to Alexey Ustinov and Fedor Popelenskiy for useful discussions and to Alexander Gaifullin for the constructive critical comments.

Our special thanks go to Sergei Petrovich Novikov, who is one of the founders of the theory of complex cobordisms, which plays a key role in the proof of our results.
We appreciate very much his support and encouragement from the start of our scientific career.

\medskip

{\bf Note added in proof.} Sergei Petrovich Novikov had passed away on June 6, 2024. This paper is dedicated to his blessed memory, which will live in our hearts forever.

\end{document}